\theoremstyle{plain} 
\newtheorem{theorem}{Theorem}[section]
\newtheorem{lemma}[theorem]{Lemma}
\newtheorem{corollary}[theorem]{Corollary}
\newtheorem{fact}[theorem]{Fact}
\theoremstyle{definition}
\newtheorem{remark}[theorem]{Remark}
\newtheorem{example}[theorem]{Example}
\newcommand{\rmG}{\mathrm{G}}
\newcommand{\rmx}{\mathrm{x}}
\newcommand{\rmy}{\mathrm{y}}
\begin{document}

\title[Bernstein-type theorem for CMC surfaces in $\mathbb{I}^3$]
{Bernstein-type theorem for constant mean curvature surfaces in the isotropic 3-space}

\author[S.Akamine]{Shintaro Akamine}
\address{Shintaro Akamine \\
College of Bioresource Sciences \\ Nihon University \\ 1866 Kameino, Fujisawa, Kanagawa, 252-0880, Japan}
\email{akamine.shintaro@nihon-u.ac.jp}

\author[W.Lee]{Wonjoo Lee}
\address{Wonjoo Lee \\ Department of Mathematics \\ Jeonbuk National University \\ 567 Baekje-daero, Deokjin-gu, Jeonju-si, Jeonbuk-do 54896, Republic of Korea}
\email{w$\_$lee@jbnu.ac.kr}

\author[S-D.Yang]{Seong-Deog Yang}
\address{Seong-Deog Yang \\ Department of Mathematics \\ Korea University \\ 145 Anam-ro, Seongbuk-gu, Seoul 02841, Republic of Korea}
\email{sdyang@korea.ac.kr}

\subjclass[2020]{Primary 53A10, 53B30, 35B08}
\keywords{zero mean curvature surface, constant mean curvature surface, Bernstein theorem, isotropic space}

\begin{abstract}
There are many non-trivial entire spacelike graphs with constant mean curvature $H$ (CMC $H$, for short) in the isotropic 3-space $\mathbb{I}^3$. In this paper, we show a value distribution theorem of Gaussian curvature of complete spacelike  constant mean curvature surfaces in $\mathbb{I}^3$, which implies a Bernstein-type theorem for CMC $H$ graphs in $\mathbb{I}^3$.
\end{abstract}

\maketitle


\section{Introduction}

Recently, the study of surfaces in degenerate spaces is gaining momentum. Many well known propositions of the classical surface theories are tested in degenerate spaces and new ones are being found. In this article, we focus on spacelike surfaces in the isotropic $3$-space $\mathbb{I}^3$, which can be identified with $\mathbb{R}^3(\ell, x, y)$ equipped with the degenerate metric $\operatorname{d}\!x^2+\operatorname{d}\!y^2$. 

Bernstein's problem, posed by Bernstein \cite{B, B2}, has attracted many mathematicians, and has been considered for various classes of hypersurfaces in various spaces, and there exist a plethora of excellent results related to it. Here we list only the minimum necessary references to give a rough historical background.

For the situation of vanishing mean curvature $H=0$, the original Bernstein theorem \cite{B, B2} states that any entire minimal graph in the Euclidean $3$-space $\mathbb{E}^3$ must be a plane. After that, in the 1960s, Bernstein's problem in $\mathbb{E}^n$ was also solved affirmatively when $n\leq 8$ by Fleming \cite{Fleming}, De Giorgi \cite{Giorgi}, Almgren \cite{Almgren} and Simons \cite{Simons}, and solved negatively when $n\geq 9$ by Bombieri, Giorgi and Giusti \cite{BGG}.
On the other hand, from the late 1960s, Calabi \cite{C} and Cheng-Yau \cite{CY} solved Bernstein's problem in the Lorentz-Minkowski $n$-space $\mathbb{L}^n$ affirmatively for arbitrary $n$, that is, any maximal hypersurfaces which is an entire graph must be a spacelike hyperplane. 

For the situation of constant mean curvature $H$ (CMC $H$, for short), Heinz \cite{H} proved in 1955 an estimation of $H$. As a corollary of Heinz's estimation, it was shown that any entire CMC $H$ graph must be a plane, that is, $H=0$. After that, in 1965-1966, Chern \cite{Chern} and Flanders \cite{F} generalized this result for hypersurfaces in $\mathbb{E}^n$. 
On the other hand, it was pointed out by Treibergs \cite{T} in 1982 that there are many entire non-zero CMC $H$ graphs in $\mathbb{L}^n$ other than the totally umbilic hyperboloid. 
Among such CMC $H$ entire graphs in $\mathbb{L}^n$, a Heinz-type estimation that characterize planes has also been studied recently by Honda-Kawakami-Koiso-Tori \cite{HKKT}.

In summary, Bernstein-type theorems can take various forms depending on whether the mean curvature $H$ is zero or not and what the ambient space is. 
Based on these studies, in this paper we give the following theorem which includes a Bernstein-type theorem for CMC $H$ surfaces in the isotropic $3$-space $\mathbb{I}^3$.

\begin{theorem}\label{thm:main}
If a connected complete spacelike constant mean curvature $H$ surface in $\mathbb{I}^3$ has non-constant Gaussian curvature $K$, then $K$ must take all values less than $H^2$ infinitely many times.  

In particular, if the Gaussian curvature $K$ of a connected complete spacelike constant mean curvature $H$ surface has an exceptional value less than $H^2$, then $K$ is constant, and the followings hold. 
\begin{itemize}
\item[$(1)$] When $H = 0$, the surface is either a plane or a rectangular hyperbolic paraboloid.
\item[$(2)$] When $H\ne0$, the surface is either a cylinder, an elliptic paraboloid, or a non-rectangular hyperbolic paraboloid.
\end{itemize}
\end{theorem}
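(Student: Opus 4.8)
The plan is to reduce everything to the value distribution of a single entire holomorphic function. \emph{First, I would pass to an entire graph.} The first fundamental form of a spacelike surface in \(\mathbb{I}^3\) is the pull-back of \(\mathrm{d}x^2+\mathrm{d}y^2\), hence flat, and the vertical projection \((\ell,x,y)\mapsto(x,y)\) restricts to a local isometry to \(\mathbb{R}^2\). If the surface \(\Sigma\) is complete, then this local isometry is a Riemannian covering of the simply connected plane, hence a diffeomorphism, so \(\Sigma\) is the entire graph of some \(f\in C^\infty(\mathbb{R}^2)\); the CMC \(H\) condition reads \(f_{xx}+f_{yy}=2H\) and the isotropic Gaussian curvature is \(K=f_{xx}f_{yy}-f_{xy}^2\). \emph{Next I would introduce} \(\phi:=\tfrac12(f_{xx}-f_{yy})-\mathrm{i}\,f_{xy}\) on \(\mathbb{C}=\mathbb{R}^2(x,y)\). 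Differentiating \(f_{xx}+f_{yy}\equiv 2H\) shows the Cauchy--Riemann equations for \(\phi\) hold, so \(\phi\) is entire, and a direct computation gives
\[
|\phi|^2=\tfrac14(f_{xx}-f_{yy})^2+f_{xy}^2=H^2-(f_{xx}f_{yy}-f_{xy}^2)=H^2-K .
\]
In particular \(K\le H^2\) everywhere, and \(K\) is non-constant precisely when the holomorphic function \(\phi\) is non-constant.

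For the value-distribution statement, assume \(\phi\) is non-constant. Then \(\phi(\mathbb{C})\) is open and connected; if it missed a circle \(\{|w|=r\}\) with \(r>0\), connectedness would force \(\phi(\mathbb{C})\subset\{|w|<r\}\), contradicting Liouville's theorem, or \(\phi(\mathbb{C})\subset\{|w|>r\}\), contradicting Liouville applied to \(1/\phi\). Hence \(|\phi|\) attains every value in \((0,\infty)\), and so \(K=H^2-|\phi|^2\) attains every value less than \(H^2\); this is the first assertion. (Picard's little theorem gives the same conclusion, since an entire map omitting an entire circle omits more than one point.) Conversely, if \(K\) omits some value \(c<H^2\), then \(\phi\) omits the circle of radius \(\sqrt{H^2-c}>0\), so by the above \(\phi\) must be constant and \(K\equiv c\).

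It then remains to classify the case \(\phi\equiv\phi_0\) constant. Then \(\Hess f\) is a constant symmetric matrix with \(\tr\Hess f=2H\) and \(\det\Hess f=K=H^2-|\phi_0|^2\), so \(f\) is a polynomial of degree at most two and the surface is a quadric. When \(H=0\) one has \(K=-|\phi_0|^2\le 0\): if \(\phi_0=0\) the surface is a plane, and if \(\phi_0\neq 0\) the Hessian is traceless and nonzero, so after an isotropic rotation \(f\) is a nondegenerate indefinite quadratic form, i.e.\ a rectangular hyperbolic paraboloid. When \(H\neq 0\): if \(|\phi_0|<|H|\) the Hessian is definite, giving an elliptic paraboloid; if \(|\phi_0|=|H|\) it has rank one, since its trace is nonzero, giving a cylinder; and if \(|\phi_0|>|H|\) it is indefinite with nonzero trace, so its null directions are not orthogonal, giving a non-rectangular hyperbolic paraboloid. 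This matches the list in the statement.

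The \textbf{main difficulty} here is not analytic — after \(\phi\) is identified, the heart of the argument is just Liouville's theorem — but bookkeeping: one must justify carefully the reduction from a complete spacelike surface to an entire graph, verify holomorphy of \(\phi\) and the identity \(|\phi|^2=H^2-K\) from the conventions for \(H\) and \(K\) in \(\mathbb{I}^3\), and then line up the elementary trichotomy for a constant Hessian (definite, rank one, indefinite, refined by whether \(\tr\Hess f=2H\) vanishes) with the isotropic classification of quadric surfaces, in particular the rectangular versus non-rectangular hyperbolic paraboloid distinction.
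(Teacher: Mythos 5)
Your proof is correct, and while its skeleton matches the paper's (reduce to an entire graph, produce an entire holomorphic function whose squared modulus is $H^2-K$, apply a value-distribution theorem, classify the constant case), the way you obtain the key objects is genuinely different and more self-contained. The paper gets its holomorphic function as $\operatorname{d}\!h_2/\omega$ from the Weierstrass-type representation (Fact~\ref{fact:Wformula} via Lemma~\ref{lemma}) and classifies the constant-curvature case by explicitly integrating the Weierstrass data (Lemma~\ref{lemma2}); you instead define $\phi=\tfrac12(f_{xx}-f_{yy})-i f_{xy}$ directly on the graph, check the Cauchy--Riemann equations from $\Delta f\equiv 2H$, verify $|\phi|^2=H^2-K$ by hand, and settle the constant case by elementary linear algebra on the constant Hessian --- bypassing the representation formula entirely. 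For the value distribution, the paper invokes Picard's little theorem, whereas your connectedness-plus-Liouville argument (applied to $\phi$ and to $1/\phi$) is strictly more elementary and suffices here because only the modulus $|\phi|$ needs to hit every value in $(0,\infty)$, not $\phi$ itself to hit all of $\mathbb{C}$ minus at most one point. Your graph reduction (local isometry from a complete surface onto the simply connected plane is a covering, hence a diffeomorphism) is exactly the content of the result the paper cites from Sato. The trade-off is that the paper's route reuses machinery needed elsewhere in the article, while yours is shorter and independent of it; both yield the same trichotomy for the image of $K$ and the same list of quadrics.
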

 See Section~\ref{Sec.2} for the meaning of a rectangular hyperbolic paraboloid. 

The proof of Theorem \ref{thm:main} is given in Section \ref{sec3}. Since completeness of a surface in $\mathbb{I}^3$ induces the fact that the surface is an entire graph, we can show Bernstein-type theorems for constant mean curvature surfaces in $\mathbb{I}^3$ from Theorem \ref{thm:main}, see Corollaries \ref{cor1} and \ref{cor2}.
Finally, a PDE theoretical interpretation of Theorem \ref{thm:main} is also given in Section \ref{sec:pde}.


\section{Preliminaries}\label{Sec.2}

\subsection{Isotropic 3-space $\mathbb{I}^3$}

The isotropic 3-space $\mathbb{I}^3$ originates as one of the Calye-Klein geometries, and
can be identified with the  lightlike hyperplane
	\[
		\{  (\rmx_0, \rmx_1, \rmx_2, \rmx_3)  \in \mathbb{L}^4  : \rmx_0-\rmx_3=0 \} 
	\]
of the Lorentzian $4$-space
$\mathbb{L}^4 := \{ (\rmx_0, \rmx_1, \rmx_2, \rmx_3) : \rmx_0, \rmx_1, \rmx_2, \rmx_3 \in \mathbb{R}\} $ with the  metric
	\[
		\langle (\rmx_0, \rmx_1, \rmx_2, \rmx_3), (\rmy_0, \rmy_1, \rmy_2, \rmy_3) \rangle := -\rmx_0 \rmy_0 + \rmx_1 \rmy_1 + \rmx_2 \rmy_2 + \rmx_3 \rmy_3.
	\]

The map
$$
\mathbb{R}^3 \to \mathbb{L}^4, \qquad
(\ell,  x, y) \mapsto (\ell,  x, y,\ell)
$$
provides a global coordinate chart for the lightlike hyperplane,
through which we identify $\mathbb{I}^3$ with $\mathbb{R}^3$ equipped with the coordinates 
$\ell, x, y$, the pullback metric
	\begin{equation}\label{Eq:MetricOfI3}
		\operatorname{d}\!s^2=\operatorname{d}\!x^2+\operatorname{d}\!y^2,
	\end{equation}
and the maps 
	\begin{equation}\label{Eq:202505200927AM}
	\iota\begin{pmatrix} \ell \\ x \\ y \end{pmatrix} = \left( \begin{array}{ccc}
   s & k_1 & k_2 \\
   \multicolumn{1}{c}{%
     \begin{array}{@{} c @{}}
       0 \\
       0 
     \end{array}
   } &
   \multicolumn{2}{c}{%
     \begin{array}{@{} c @{}}
       \vcenter{\hbox{\scalebox{1}{$\mathcal{R}$}}}
     \end{array}
   }
	\end{array} \right) \begin{pmatrix} \ell \\ x \\ y \end{pmatrix} + \begin{pmatrix} \ell_0 \\ x_0 \\ y_0 \end{pmatrix}
	\end{equation}

for some $k_1,k_2,\ell_0,x_0,y_0\in\mathbb{R}$ and $\mathcal{R} \in O(2)$, $s\in\{-1,1\}$
as isometries. For details, see \cite{Sachs, SY, Silva1,Strubecker} for example.

\subsection{ Basic surface theory of $\mathbb{I}^3$}\label{Sec:2.2}

For any spacelike immersion $X\colon\Sigma \to \mathbb{I}^3$,
there is a unique map $\rmG \colon \Sigma \to \mathbb{L}^4$ with
	\[
		\langle \rmG, \rmG \rangle 
		= \langle \rmG, \operatorname{d}\!X \rangle 
		= \langle \rmG, \mathfrak{p} \rangle -1 = 0,
		\qquad\text{where}\quad \mathfrak{p} := (1,0,0,1).
	\]
We call $\rmG$ the {\it lightlike Gauss map} of $X$. We may assume without loss of generality that the metric can be written as
	\[
		\operatorname{d}\!s^2=e^{2{\sigma}}(\operatorname{d}\!u^2+\operatorname{d}\!v^2)=e^{2{\sigma}}\operatorname{d}\!z\operatorname{d}\!\bar{z},\qquad\text{where}\quad z := u+iv
	\]
for some coordinate system  $(u,v)$ and for some function $\sigma \colon \Sigma \to \mathbb{R}$. 
Then we have
	\[
		H =2e^{-2\sigma}\langle \rmG, X_{z\bar{z}} \rangle, 								\qquad
		Q\operatorname{d}\!z^2 = \langle \rmG, X_{zz} \rangle\operatorname{d}\!z^2,		\qquad
		K=H^2-4Q\bar{Q}e^{-4{\sigma}}
	\]
for the mean curvature, the Hopf differential, and the Gaussian curvature of $X$, respectively.
Note that $H^2 - K \ge 0$ and the Gauss equation is written as
	\[
		\sigma_{z\bar{z}}=0.
	\]
This implies that $\operatorname{d}\!s^2$ is a flat metric and hence the Gaussian curvature $K$ is not intrinsic.

For the graph of $\ell=f(x,y)$, $H$ and $K$ are written as
	\begin{equation}\label{eq:HK}
		H=\frac{1}{2}(f_{xx}+f_{yy}),\qquad K=f_{xx}f_{yy}-f_{xy}^2.
	\end{equation}
We remark that the graph of a smooth function is always spacelike. 

Note that the graph of $f(x,y) = \alpha x^2 + \beta y^2$ has $H=\alpha+\beta$ and $K=4\alpha \beta$. Its image by any of the isometries \eqref{Eq:202505200927AM} is called a {\it plane} if $\alpha=\beta=0$, a {\it cylinder} if $\alpha\beta=0$, $(\alpha,\beta)\neq (0,0)$, an {\it elliptic paraboloid} if $\alpha\beta>0$, a {\it hyperbolic paraboloid} if $\alpha\beta<0$, a {\it circular paraboloid} if $\alpha\beta>0$, $\alpha=\beta>0$,  a rectangular hyperbolic paraboloid if $\alpha\beta<0$, $\alpha=-\beta$.

\subsection{Weierstrass-type representation for constant mean curvature surfaces in $\mathbb{I}^3$}\label{sec:representation}

Let us recall the Weierstrass-type representation formula for constant mean curvature $H$ surfaces in $\mathbb{I}^3$ proved by Strubecker \cite{StrubeckerIII} for the case of $H=0$ and by Cho-Lee-Lee-Yang \cite{CLLY} for the case of $H\neq 0$.

\begin{fact}\label{fact:Wformula}
Any constant mean curvature $H$ immersion $X$ can locally be represented as 
	\begin{equation}\label{eq:Wformula}
	X(z)= \Re \int \left(\overline{h_1}+h_2, 1, -i\right)\omega, \quad h_1:=H\int \omega,
	\end{equation}
where $h_2$ is a holomorphic function, $\omega=\hat{\omega}\operatorname{d}\!z$ is a nowhere vanishing holomorphic 1-form. We call the pair $(h_2,\omega)$ the {\it Weierstrass data} of $X$.
\end{fact}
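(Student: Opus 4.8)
The plan is to read the representation off directly from the structure equations recorded above, handling the planar part $(x,y)$ and the height $\ell$ separately. First I would fix a conformal coordinate $z=u+iv$ with $ds^2=e^{2\sigma}\,dz\,d\bar z$. Because the metric \eqref{Eq:MetricOfI3} involves only $x$ and $y$, conformality is the single condition $(x_z)^2+(y_z)^2=0$ on the planar part, i.e. $(x_z+iy_z)(x_z-iy_z)=0$; fixing the orientation of $z$ (replacing $z$ by $\bar z$ if needed) I may assume $(x+iy)_{\bar z}=\overline{x_z-iy_z}=0$, so that $x+iy$ is holomorphic. I then set $\omega:=d(x+iy)=\hat\omega\,dz$, which is a holomorphic $1$-form, nowhere vanishing because $e^{2\sigma}=|\hat\omega|^2>0$ for an immersion. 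This already yields the second and third slots of \eqref{eq:Wformula}, namely $x=\Re\int\omega$ and $y=\Re\int(-i)\omega$, and it shows in passing that $x$ and $y$ are harmonic, so $x_{z\bar z}=y_{z\bar z}=0$.

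Next I would translate the mean-curvature relation $H=2e^{-2\sigma}\langle\rmG,X_{z\bar z}\rangle$ into a scalar equation for $\ell$. Writing $X=(\ell,x,y,\ell)$ in $\mathbb{L}^4$ and using $x_{z\bar z}=y_{z\bar z}=0$, the vector $X_{z\bar z}$ has the form $(\ell_{z\bar z},0,0,\ell_{z\bar z})$; pairing it with $\rmG=(G_0,G_1,G_2,G_3)$ and invoking the normalization $\langle\rmG,\mathfrak p\rangle=1$, equivalently $G_3-G_0=1$, collapses the inner product to $\langle\rmG,X_{z\bar z}\rangle=\ell_{z\bar z}$. Hence the CMC condition becomes the single linear equation $\ell_{z\bar z}=\tfrac12 H e^{2\sigma}=\tfrac12 H|\hat\omega|^2$, with $H$ constant.

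With this in hand the representation is produced by solving for $\ell_z$. I would set $h_1:=H\int\omega$, a holomorphic function with $h_1'=H\hat\omega$, and define $h_2:=2\ell_z/\hat\omega-\overline{h_1}$. A one-line computation of $\partial_{\bar z}h_2$, using $\partial_{\bar z}(1/\hat\omega)=0$, the equation $\ell_{z\bar z}=\tfrac12 H|\hat\omega|^2$, and $\partial_{\bar z}\overline{h_1}=\overline{h_1'}=H\overline{\hat\omega}$, gives $\partial_{\bar z}(2\ell_z/\hat\omega)=2\ell_{z\bar z}/\hat\omega=H\overline{\hat\omega}$, so $\partial_{\bar z}h_2=0$ and $h_2$ is holomorphic, with $\ell_z=\tfrac12(\overline{h_1}+h_2)\hat\omega$. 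Since $\ell$ is real, $d\ell=2\Re(\ell_z\,dz)=\Re[(\overline{h_1}+h_2)\omega]$, and integrating over a simply connected patch gives $\ell=\Re\int(\overline{h_1}+h_2)\omega$, which assembled with the planar part is exactly \eqref{eq:Wformula}.

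The conceptual crux, and the only place where the hypothesis enters, is the step $\partial_{\bar z}h_2=0$: it holds precisely because $H$ is constant, since that is what makes $\partial_{\bar z}(2\ell_z/\hat\omega)=H\overline{\hat\omega}$ cancel against $\partial_{\bar z}\overline{h_1}=H\overline{\hat\omega}$. The main technical obstacle I anticipate is the bookkeeping of the preceding paragraph, namely verifying carefully that $\langle\rmG,X_{z\bar z}\rangle=\ell_{z\bar z}$ from the defining properties of the lightlike Gauss map together with the harmonicity of the planar part; once that identity is secured, the rest is the routine holomorphicity argument above. For completeness one should also note the converse by reversing these steps: any holomorphic data $(h_2,\omega)$ with $\omega$ nowhere vanishing reconstructs a spacelike CMC $H$ immersion, the regularity being guaranteed again by $e^{2\sigma}=|\hat\omega|^2\neq0$.
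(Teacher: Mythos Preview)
Your argument is correct. The paper itself does not prove this statement: it records it as a Fact quoted from \cite{CLLY}, so there is no in-paper proof to compare against. What you have supplied is a clean self-contained derivation using only the material in Section~\ref{Sec:2.2}. The two key identities you isolate, namely that the planar part $x+iy$ is holomorphic (whence $\omega=d(x+iy)$ is a nowhere-vanishing holomorphic $1$-form with $e^{2\sigma}=|\hat\omega|^2$) and that $\langle\rmG,X_{z\bar z}\rangle=\ell_{z\bar z}$ via the normalization $G_3-G_0=1$, are exactly what is needed, and your cancellation $\partial_{\bar z}(2\ell_z/\hat\omega)=H\overline{\hat\omega}=\partial_{\bar z}\overline{h_1}$ is where the constancy of $H$ enters. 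The converse direction you sketch at the end is also correct and worth keeping, since the paper uses the formula in that direction as well (e.g.\ in Example~\ref{ex1}).
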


If we take $H=0$ in \eqref{eq:Wformula}, then we obtain a zero mean curvature surface. Hence, we can see that any constant mean curvature $H$ immersion $X=X_H$ is realized by the deformation $\{X_H\}_{H\in \mathbb{R}}$ of a zero mean curvature surface $X_0$ by changing the mean curvature $H$ in \eqref{eq:Wformula}. In this sense, we call $X_H$ the {\it CMC $H$ lift} of $X_0$.
 Furthermore, the first fundamental form of $X$ in \eqref{eq:Wformula} is $|\omega|^2=|\hat{\omega}|^2\operatorname{d}\!z\operatorname{d}\!\bar{z}$, hence the deformation $\{X_H\}_{H\in \mathbb{R}}$ is an isometric deformation.


\section{Proof of Theorem \ref{thm:main}}\label{sec3}

In this section, we give a proof of Theorem \ref{thm:main} and some important examples.
First, we prepare some Lemmas.

By using the representation formula \eqref{eq:Wformula} in Fact \ref{fact:Wformula}, we can directly check the following  relation between curvatures and Weierstrass data $(h_2,\omega)$.

\begin{lemma}\label{lemma}
The Gaussian curvature $K$ and the mean curvature $H$ of a constant mean curvature $H$ surface in $\mathbb{I}^3$ satisfy the following relation

	\begin{equation}\label{eq:KH}
	K=H^2-\left| \frac{\operatorname{d}\!h_2}{\omega} \right|^2.
	\end{equation}
\end{lemma}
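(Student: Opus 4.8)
The plan is to derive \eqref{eq:KH} directly from the Weierstrass-type representation \eqref{eq:Wformula} by computing the relevant quantities $\sigma$, $Q$, and the metric conformal factor in terms of the data $(h_2,\omega)$, and then substituting into the general formula $K = H^2 - 4Q\bar Q e^{-4\sigma}$ recorded in Section~\ref{Sec:2.2}. Writing $\omega = \hat\omega\,\operatorname{d}\!z$, Fact~\ref{fact:Wformula} already tells us that the first fundamental form is $|\omega|^2 = |\hat\omega|^2\operatorname{d}\!z\operatorname{d}\!\bar z$, so $e^{2\sigma} = |\hat\omega|^2$, i.e. $e^{\sigma} = |\hat\omega|$. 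It remains to compute the Hopf differential $Q\operatorname{d}\!z^2 = \langle \rmG, X_{zz}\rangle \operatorname{d}\!z^2$ for the immersion given by \eqref{eq:Wformula}.

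First I would identify the lightlike Gauss map $\rmG$ of $X$. From \eqref{eq:Wformula} we have $X_z = \tfrac12(\overline{h_1} + h_2,\,1,\,-i)\hat\omega$ (using that $X = \Re\int(\dots)\omega$ so $X_z$ is half the integrand coefficient times $\hat\omega$, with the understanding that in $\mathbb{L}^4$ the first slot is the repeated $\ell = \rmx_0 = \rmx_3$ coordinate). The defining conditions $\langle\rmG,\rmG\rangle = \langle\rmG,\operatorname{d}\!X\rangle = \langle\rmG,\mathfrak p\rangle - 1 = 0$ with $\mathfrak p = (1,0,0,1)$ pin down $\rmG$ uniquely; one checks that $\rmG$ can be written explicitly in terms of $h_1, h_2$ and their conjugates, analogously to the classical lightlike Gauss map in such Weierstrass representations. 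Then I would differentiate $X_z$ once more to get $X_{zz}$: since $h_1 = H\int\omega$ is holomorphic with $\partial_z h_1 = H\hat\omega$ and $h_2$ is holomorphic, the only non-holomorphic piece is $\overline{h_1}$, whose $z$-derivative vanishes, so $X_{zz} = \tfrac12(\overline{h_1}+h_2,\,1,\,-i)\hat\omega_z + \tfrac12(0,\,0,\,0)\hat\omega + \tfrac12((h_2)_z,\,0,\,0)\hat\omega$; pairing against $\rmG$ and using $\langle\rmG, X_z\rangle = 0$ kills the $\hat\omega_z$ term, leaving $Q = \langle\rmG, X_{zz}\rangle$ proportional to $(h_2)_z\,\hat\omega$ times whatever component of $\rmG$ multiplies the first ($\ell$) slot. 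A careful bookkeeping of the $O(1,3)$ inner product should give $Q = \tfrac12 (h_2)_z\,\hat\omega$ up to a harmless constant, whence $4Q\bar Q e^{-4\sigma} = \bigl|(h_2)_z/\hat\omega\bigr|^2 = \bigl|\operatorname{d}\!h_2/\omega\bigr|^2$, and substituting into $K = H^2 - 4Q\bar Q e^{-4\sigma}$ yields \eqref{eq:KH}.

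The main obstacle I anticipate is pinning down the exact form of the lightlike Gauss map $\rmG$ and, in particular, the constant in $Q = \langle\rmG,X_{zz}\rangle$; this is where the degenerate geometry of $\mathbb{I}^3$ (the metric sees only the $x,y$ components, but $\rmG$ lives in the ambient $\mathbb{L}^4$ and its $\rmx_0,\rmx_3$ components matter for the pairing) makes the normalization delicate. Once the correct normalization $\langle\rmG,\mathfrak p\rangle = 1$ is enforced, the first-slot component of $\rmG$ is determined, and the computation of $\langle\rmG,X_{zz}\rangle$ reduces to extracting the coefficient of the holomorphic derivative $(h_2)_z$. The remaining steps — confirming $e^{2\sigma} = |\hat\omega|^2$ and assembling $K$ — are routine substitutions. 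I would therefore organize the proof as: (i) write down $X_z$, $X_{zz}$ from \eqref{eq:Wformula}; (ii) determine $\rmG$ from its three defining conditions; (iii) compute $H$, $e^{2\sigma}$, and $Q$ in terms of $(h_2,\omega)$, checking consistency with Fact~\ref{fact:Wformula}; and (iv) substitute into $K = H^2 - 4Q\bar Q e^{-4\sigma}$ to obtain \eqref{eq:KH}.
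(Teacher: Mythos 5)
Your proposal is correct and is essentially the computation the paper has in mind: the paper offers no written proof beyond ``we can directly check'' from the representation formula, and your direct check (with $e^{2\sigma}=|\hat\omega|^2$, the $\hat\omega_z$-term in $X_{zz}$ killed by $\langle \rmG,X_z\rangle=0$, and the surviving term weighted by $\langle \rmG,\mathfrak p\rangle=-g_0+g_3=1$, giving $Q=\tfrac12 (h_2)_z\hat\omega$ and $4Q\bar Q e^{-4\sigma}=|\operatorname{d}\!h_2/\omega|^2$) is exactly the intended verification.
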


\begin{lemma}\label{lemma2}
Any constant mean curvature $H$ surface with constant Gaussian curvature $K$ is a part of the surface
	\begin{equation}\label{eq:EHP}
	\ell = f(x,y)=H\left( \frac{x^2+y^2}{2}\right)+ \sqrt{H^2-K}\left( \frac{x^2-y^2}{2}\right)
	\end{equation}
up to an isometry of $\mathbb{I}^3$.
\end{lemma}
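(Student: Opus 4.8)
The plan is to substitute the hypothesis of constant Gaussian curvature directly into the Weierstrass-type data $(h_2,\omega)$ of Fact~\ref{fact:Wformula} and to recognize the resulting surface as the graph \eqref{eq:EHP}.

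The first step uses Lemma~\ref{lemma}: if $K$ is constant, then $|dh_2/\omega|^2=H^2-K$ is a non-negative constant. The function $dh_2/\omega=h_2'/\hat\omega$ is holomorphic — a ratio of holomorphic functions whose denominator never vanishes — and a holomorphic function of constant modulus on a connected domain is constant, so $dh_2/\omega\equiv c$ for some constant $c$ with $|c|=\sqrt{H^2-K}$. Since $\omega$ is a nowhere-vanishing holomorphic $1$-form, $w:=\int\omega$ serves as a local holomorphic coordinate with $\omega=dw$, and in this coordinate both $h_1=H\int\omega$ and $h_2$ are affine in $w$: $h_1=Hw+a$, $h_2=cw+b$ with $a,b\in\mathbb C$.

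The second step is a direct computation from \eqref{eq:Wformula}. Writing $w=x+iy$, the induced metric $|\omega|^2=dx^2+dy^2$ coincides with \eqref{Eq:MetricOfI3}, and the second and third components of $X$ reduce to $x$ and $y$, so $X$ is a graph $\ell=X^0(x,y)$. For the first component, $\Re(\overline{h_1}\,\omega)=\Re(H\bar w\,dw)=d\!\left(\tfrac H2(x^2+y^2)\right)$ is exact and $\Re(h_2\,\omega)=\Re\,d\!\left(\tfrac c2 w^2+bw\right)$, whence
\[
\ell=\frac H2(x^2+y^2)+\Re\frac{c\,w^2}{2}+\lambda x+\mu y+\nu
\]
for real constants $\lambda,\mu,\nu$ collecting the contributions of $a$, $b$ and the integration constant of $X$. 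The homogeneous quadratic $\Re\tfrac c2 w^2$ is trace-free with eigenvalues $\pm\tfrac12\sqrt{H^2-K}$, while $\tfrac H2(x^2+y^2)$ is $O(2)$-invariant; choosing a suitable $\mathcal R\in O(2)$ in \eqref{Eq:202505200927AM} therefore diagonalizes the quadratic part to $\tfrac H2(x^2+y^2)+\tfrac12\sqrt{H^2-K}\,(x^2-y^2)$, and the shear parameters $k_1,k_2$ together with the translation parts of \eqref{Eq:202505200927AM} remove the remaining linear and constant terms. Up to an isometry of $\mathbb I^3$, the graph is thus \eqref{eq:EHP} on a coordinate patch.

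The last step globalizes. After replacing $X$ by its composition with the isometry above, so that $X(U)$ lies in $G:=\{\ell=f(x,y)\}$ with $f$ the right-hand side of \eqref{eq:EHP} for some open $U$, the function $(\ell-f)\circ X$ is real-analytic on the (connected) surface and vanishes on the nonempty open set $U$; by unique continuation it vanishes identically, so the whole surface maps into $G$, i.e.\ it is a part of a congruent copy of \eqref{eq:EHP}. The steps that require care rather than routine bookkeeping are: passing from constancy of $|dh_2/\omega|$ to constancy of $dh_2/\omega$ itself; checking that every extra affine term and the off-diagonal quadratic term genuinely lies in the isometry group \eqref{Eq:202505200927AM}; and the final real-analytic continuation argument. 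The degenerate case $H^2=K$ needs no separate treatment, since then $c=0$ and \eqref{eq:EHP} is just $\ell=\tfrac H2(x^2+y^2)$.
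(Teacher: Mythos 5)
Your proposal is correct and follows essentially the same route as the paper: apply Lemma~\ref{lemma}, deduce that the holomorphic function $\operatorname{d}\!h_2/\omega$ of constant modulus is a constant $e^{i\theta}\sqrt{H^2-K}$, pass to the coordinate $z=\int\omega=x+iy$, integrate the representation formula, and normalize the resulting quadratic by a rotation (the paper uses $\tilde\theta=-\theta/2$ where you diagonalize the trace-free part). Your extra care with the additive constants, the shear parameters $k_1,k_2$, and the real-analytic continuation to the whole connected surface only makes explicit what the paper leaves implicit.
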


\begin{proof}
By Lemma \ref{lemma}, 
	\[
		\left|\frac{\operatorname{d}\!h_2}{\omega} \right|^2 = H^2-K.
	\]
Since the function $\operatorname{d}\!h_2/\omega $ is holomorphic, there exists a real number $\theta$ such that 
	\begin{equation}\label{eq:constancy}
	\frac{\operatorname{d}\!h_2}{\omega} =e^{i\theta}\sqrt{H^2-K}.
	\end{equation}

On the other hand, by the formula \eqref{eq:Wformula} we have the relation
	\[
		\int{\omega} = x+iy.
	\]
The relation $\hat{\omega}\neq 0$ implies that $x,y$ coordinates in $\mathbb{I}^3$ give local isothermal coordinates of the surface. Hence, we can take $z=x+iy$ as a complex coordinate so that $\omega=\operatorname{d}\!z$. 
For such a complex coordinate, $h_1$ and $h_2$ can be computed as
	\[
		h_1=Hz,\quad h_2=\sqrt{H^2-K}e^{i\theta}z
	\]
up to additive complex constants by \eqref{eq:Wformula} and \eqref{eq:constancy}.
Therefore, by using the representation formula \eqref{eq:Wformula} again, the coordinate $\ell$ of the surface can be computed as follows :
\begin{align*}
\ell &= \Re{\int{\overline{h_1}\omega}} + \Re{\int{h_2\omega}} \\ 
&=H\left(\frac{x^2+y^2}{2} \right) +\sqrt{H^2-K}\left[ \cos{\theta}\left(\frac{x^2-y^2}{2}\right)-\sin{\theta} xy\right].
\end{align*}
If we introduced new coordinates $(\tilde{x},\tilde{y})$ by the following rotation of the $xy$-plane :
	\[
		\begin{pmatrix} x \\ y \end{pmatrix} = \begin{pmatrix} \cos{\tilde{\theta}}  & -\sin{\tilde{\theta}} \\ \sin{\tilde{\theta}} & \cos{\tilde{\theta}} \end{pmatrix}
\begin{pmatrix} \tilde{x} \\ \tilde{y} \end{pmatrix}, \quad \tilde{\theta}:=-\frac{\theta}{2},
	\]
$\ell=\ell(x,y)$ can be written as
	\[
		\ell(\tilde{x},\tilde{y})=H\left(\frac{\tilde{x}^2+\tilde{y}^2}{2}\right)+\sqrt{H^2-K}\left(\frac{\tilde{x}^2-\tilde{y}^2}{2}\right)
	\]
which proves the assertion.
\end{proof}

If we let 
	\[
		\alpha := \frac{1}{2} \left( H + \sqrt{H^2-K}\right),	\qquad \beta := \frac{1}{2} \left( H - \sqrt{H^2-K}\right),
	\]
then $\ell(\tilde{x},\tilde{y}) = \alpha \tilde{x}^2 + \beta \tilde{y}^2$ and $\alpha\beta =K/4$, $\alpha + \beta = H$, hence the surface in \eqref{eq:EHP} is 

\begin{itemize}
\item[(a)]  a plane if  $K=0$ and $H=0$,
\item[(b)]  a cylinder if  $K=0$ and $H\neq 0$,
\item[(c)]  a hyperbolic  paraboloid for $K<0$, 
\item[(d)]  a rectangular  hyperbolic paraboloid if $K < 0$ and $H=0$,
 \item[(e)]  an elliptic  paraboloid for $K>0$,
\item[(f)] a (totally umbilic) circular  paraboloid if $K > 0$ and $H^2-K=0$.
\end{itemize}

The case (d) is a special case of (c) and the case (f) is a special case of (e).

\begin{proof}[{\bf Proof of Theorem \ref{thm:main}}]
Since the surface $S$ is complete, we can check that the projection $\pi\colon (\ell,x,y)\mapsto (x,y)$ gives an isometry from the surface $S$ onto its image $\pi(S)$ with the metric $\operatorname{d}\!x^2+\operatorname{d}\!y^2$ and hence $\mathbb{R}^2=\pi(S)$, see \cite[Theorem 5.1]{S1}. Therefore, the surface $S$ is an entire graph of the form $\ell=f(x,y)$ for some smooth function $f$ defined on $\mathbb{R}^2$.

 By Lemma \ref{lemma}, $H^2-K=|\varphi(z)|^2$ for some entire holomorphic function $\varphi$. If $K$ is non-constant, then so is $\varphi$. Therefore, by  Picard's little theorem, the image of $\varphi$ is either the whole complex plane or the whole complex plane minus a single point. This implies that $H^2-K$ takes all values greater than zero and hence the first assertion holds. 
 
 By the above argument, if the Gaussian curvature $K$ of the considered surface has an exceptional value less than $H^2$, then $K$ is constant. Therefore, by Lemma \ref{lemma2} and the above classification list (a) $\sim$ (f), the proof has been completed.
\end{proof}

As we saw in the above proof, completeness of a surface in $\mathbb{I}^3$ induces the fact that the surface is an entire graph. Hence, we obtain Bernstein-type theorems for constant mean curvature surfaces in $\mathbb{I}^3$ as follows.

\begin{corollary}\label{cor1}
If an entire zero mean curvature graph $\ell=f(x,y)$ has bounded Gaussian curvature $K$, then $K$ must be constant and the surface is either a plane or a rectangular hyperbolic paraboloid.
\end{corollary}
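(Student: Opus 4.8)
The plan is to deduce the corollary directly from Theorem \ref{thm:main}. The first thing I would note is that an entire graph $\ell=f(x,y)$ over $\mathbb{R}^2$ is automatically a connected complete spacelike surface in $\mathbb{I}^3$: by \eqref{Eq:MetricOfI3} the induced metric is $\operatorname{d}\!x^2+\operatorname{d}\!y^2$ regardless of the graphing function $f$, so the graph with its induced metric is isometric to the Euclidean plane $(\mathbb{R}^2,\operatorname{d}\!x^2+\operatorname{d}\!y^2)$, which is complete. Since $\ell=f(x,y)$ is a zero mean curvature graph we have $H=0$ by \eqref{eq:HK}, so the surface is a connected complete spacelike constant mean curvature $0$ surface and Theorem \ref{thm:main} applies.

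Next I would argue by the contrapositive of the first assertion of Theorem \ref{thm:main}. If $K$ were non-constant, then $K$ would take every value less than $H^2=0$; in particular $K$ would be unbounded below, contradicting the hypothesis that $K$ is bounded. Hence $K$ must be constant.

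Finally, with $K$ constant and $H=0$, Lemma \ref{lemma2} identifies the surface, up to an isometry of $\mathbb{I}^3$, with the graph \eqref{eq:EHP} for $H=0$, namely $\ell=\sqrt{-K}\,(x^2-y^2)/2$; by completeness this is the whole surface. Since $H^2-K\ge 0$ forces $K\le 0$, the classification list (a)--(f) collapses to cases (a) and (d): if $K=0$ the surface is a plane, and if $K<0$ it is a rectangular hyperbolic paraboloid of curvature $K$. This is precisely the asserted statement. There is no genuine obstacle in this argument; the only point deserving an explicit word is the completeness of an entire graph, which is immediate from the fact that the first fundamental form in $\mathbb{I}^3$ does not depend on $f$, and the rest is a direct invocation of Theorem \ref{thm:main}, Lemma \ref{lemma2}, and the already-tabulated classification.
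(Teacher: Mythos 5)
Your proposal is correct and follows the same route the paper intends: the corollary is stated as an immediate consequence of Theorem \ref{thm:main}, using exactly the observations you make --- that an entire graph is complete because its first fundamental form $\operatorname{d}\!x^2+\operatorname{d}\!y^2$ is independent of $f$, and that bounded $K$ (which by $K\le H^2=0$ means bounded below) excludes the non-constant case of the theorem. Your spelled-out contrapositive and the reduction of the classification list to cases (a) and (d) match the paper's argument.
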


\begin{corollary}\label{cor2}
If an entire non-zero constant mean curvature $H$ graph $\ell=f(x,y)$ has bounded Gaussian curvature $K$, then $K$ must be constant, and the surface is a cylinder, the elliptic paraboloid, or a non-rectangular hyperbolic paraboloid.
\end{corollary}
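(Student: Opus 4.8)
The plan is to reduce the statement to Theorem~\ref{thm:main} via the observation that an entire spacelike graph in $\mathbb{I}^3$ is automatically complete. Indeed, for a graph $\ell=f(x,y)$ the pullback of the degenerate metric $\operatorname{d}\!x^2+\operatorname{d}\!y^2$ is again $\operatorname{d}\!x^2+\operatorname{d}\!y^2$ on the domain of $f$; when $f$ is entire this domain is $\mathbb{R}^2$, so the induced metric is the complete flat metric on $\mathbb{R}^2$. Hence the surface is a connected complete spacelike CMC $H$ surface in $\mathbb{I}^3$, and Theorem~\ref{thm:main} becomes available.

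Next I would rule out non-constant $K$. If $K$ were non-constant, Theorem~\ref{thm:main} would force $K$ to attain every value less than $H^2$; in particular $K$ would be unbounded below, contradicting the hypothesis. Therefore $K$ is constant. With $K$ constant and $H\neq 0$, Lemma~\ref{lemma2} identifies the surface, up to an isometry of $\mathbb{I}^3$, with the graph of $f(\tilde x,\tilde y)=\alpha\tilde x^2+\beta\tilde y^2$ where $\alpha+\beta=H$ and $\alpha\beta=K/4$, and I would finish by the classification list (a)--(f): case (a) is excluded because $H\neq 0$; if $K=0$ we are in case (b), a cylinder; if $K>0$ we are in case (e), an elliptic paraboloid (the circular one precisely when $H^2=K$); and if $K<0$ we are in case (c), a hyperbolic paraboloid, which is not rectangular since the rectangular subcase (d) requires $\alpha=-\beta$, i.e.\ $H=0$. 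These are exactly the three alternatives in the statement.

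There is no substantial obstacle here: the real content lies in Theorem~\ref{thm:main} and Lemma~\ref{lemma2}, both already established. The only points needing a line of care are the trivial-but-essential remark that boundedness of $K$ is used solely through $K$ being bounded below, and the verification that the hypothesis $H\neq 0$ genuinely excludes the rectangular hyperbolic paraboloid of case (d), so that the wording "non-rectangular hyperbolic paraboloid" in the conclusion is justified.
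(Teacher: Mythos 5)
Your proposal is correct and follows essentially the same route as the paper: the paper likewise notes that an entire graph is complete (so Theorem~\ref{thm:main} applies), deduces that bounded $K$ forces constant $K$, and then reads off the classification from Lemma~\ref{lemma2} and the list (a)--(f). Your explicit checks that $H\neq 0$ excludes cases (a) and (d), and that only the lower bound on $K$ is used (cf.\ Remark~\ref{rem:bound}), are exactly the details the paper leaves implicit.
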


\begin{remark}\label{rem:bound}
Regarding Corollary \ref{cor1} and Corollary \ref{cor2}, we give some remarks.
\begin{itemize}
\item If we assume the boundedness of $f$ instead of that of $K$, the conclusion is that $f$ must be constant,  i.e.,  the graph of $f$ is a horizontal plane.
\item Since any spacelike surface always satisfies the relation $K\leq H^2$, boundedness of $K$ means $C_0\leq K \leq H^2$ for some constant $C_0$.
\end{itemize}
\end{remark}

Just as there are many non-trivial entire spacelike constant mean curvature graphs in Lorentz-Minkowski space \cite{T}, there are infinitely many entire constant mean curvature graphs in $\mathbb{I}^3$. Corollary \ref{cor1} and Corollary \ref{cor2} imply that all surfaces except those listed in (a)$\sim$(f) have unbounded Gaussian curvature. Moreover, Theorem \ref{thm:main} asserts that the image of Gaussian curvature $K$ of a complete constant mean curvature $H$ surface is either 
	\[
		\{\text{single point}\},\quad (-\infty, H^2)\quad \text{or}\quad (-\infty, H^2].
	\]
These correspond to the cases of ``constant (or bounded) Gaussian curvature'', ``unbounded Gaussian curvature without umbilic points'', and ``unbounded Gaussian curvature with an umbilic point'', respectively. In the following, we will give specific examples belonging to these three cases.

\begin{example}[Enneper surface of order $n$ and its CMC $H$ lift]\label{ex1}
For a positive integer $n\geq 2$, let us consider Weierstrass data $(h_2,\omega)=(z^{n-1}, \operatorname{d}\!z)$ ($z\in \mathbb{C}$), which is the same as that of the Enneper surface of order $n$ in Euclidean space (see \cite[p.211]{DHS} for example). 

The CMC $H$ lift defined by Weierstrass data $(h_2,\omega)$ is the entire graph
	\[
		\ell =\frac{1}{n}\Re{z^n}+\frac{H}{2}(x^2+y^2), \quad z \in \mathbb{C}.
	\]
By using Lemma \ref{lemma}, the Gaussian curvature $K$ of the surface is 
	\[
		K=H^2-\left|(n-1)z^{n-2}\right|^2,\quad z\in \mathbb{C}.
	\]
 For $n=2$, the surface is nothing but the CMC $H$ lift of a rectangular hyperbolic paraboloid with constant curvature $K=H^2-1$ and without an umbilic point. See Figure \ref{Fig:Ennepern2}. For $n\geq 3$, the surface has unbounded $K$ and with an umbilic point at the origin $z=0$. See Figure \ref{Fig:Ennepern3}.

\begin{figure}[htb]
\vspace{-1.0cm}
\begin{center}
 \begin{tabular}{{c@{\hspace{-20mm}}c@{\hspace{-20mm}}c}}
\hspace{-15mm}   \resizebox{8.0cm}{!}{\includegraphics[clip,scale=0.30,bb=0 0 555 449]{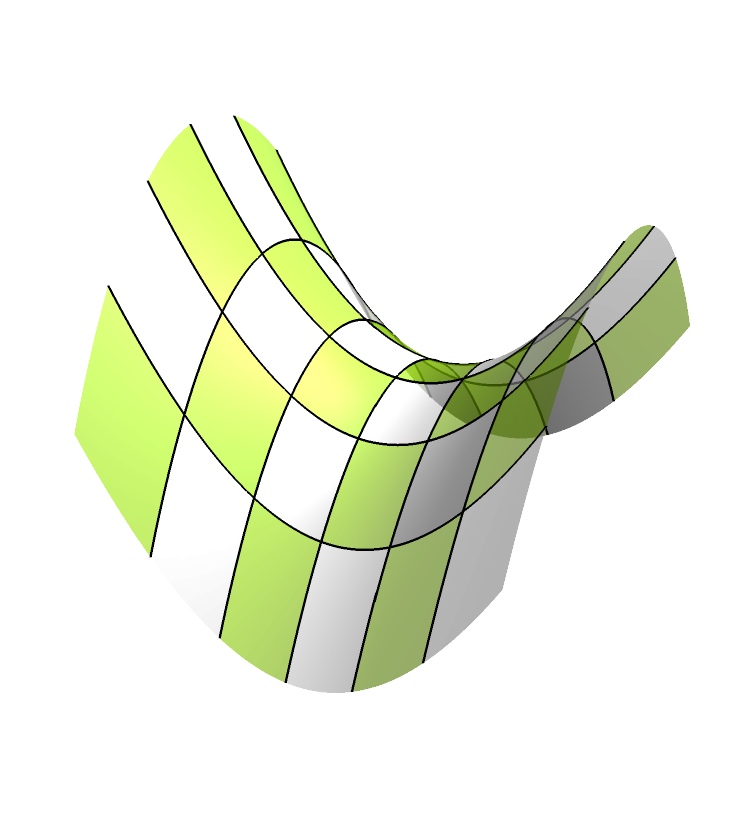}}&
 \hspace{-10mm}  \resizebox{8.0cm}{!}{\includegraphics[clip,scale=0.30,bb=0 0 555 449]{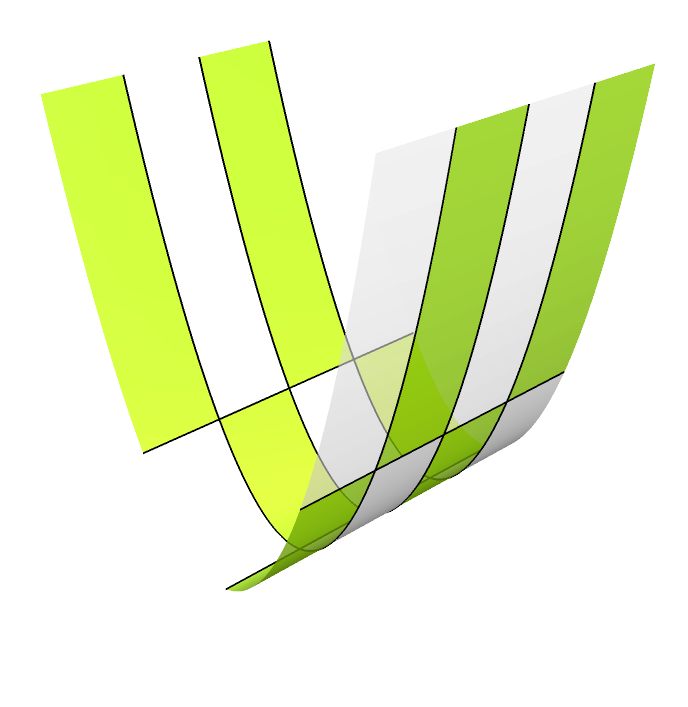}}&
 \hspace{-10mm} \resizebox{8.0cm}{!}{\includegraphics[clip,scale=0.30,bb=0 0 555 449]{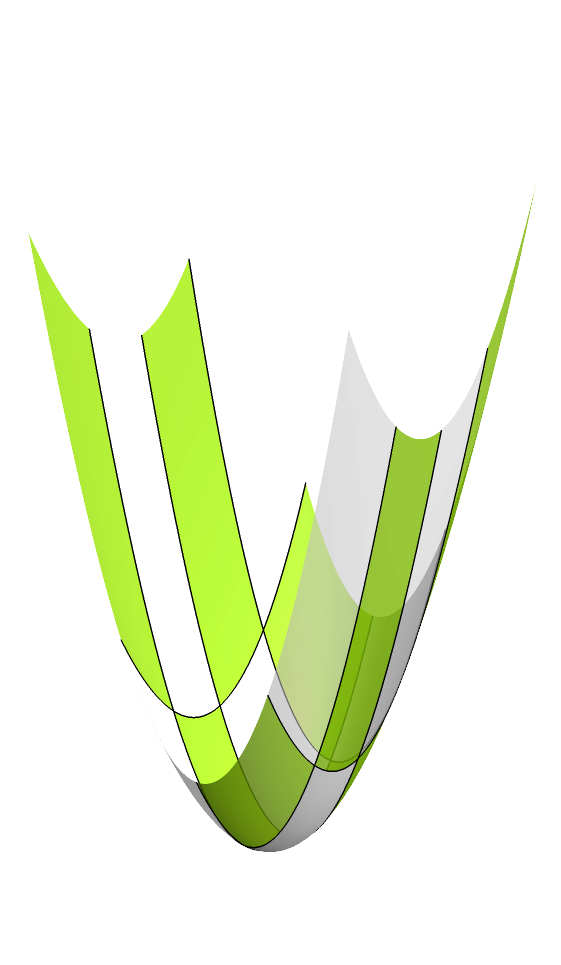}} \\
  {\hspace{-35mm}\footnotesize  Enneper surface with $H=0$} &
  {\hspace{-40mm}\footnotesize  CMC $H$ lift with $H=1$} &
  {\hspace{-45mm}\footnotesize  CMC $H$ lift with $H=2$}
 \end{tabular}
 \caption{Enneper surface of $n=2$ with $H=0$ (left) and its CMC lifts with $H=1$, which is a cylinder (center) and with $H=2$ (right). These surfaces are isometric entire constant mean curvature graphs with constant $K=H^2-1$. }
 \label{Fig:Ennepern2}
\end{center}
\end{figure}

\begin{figure}[htb]
\vspace{-1.0cm}
\begin{center}
 \begin{tabular}{{c@{\hspace{-20mm}}c@{\hspace{-20mm}}c}}
\hspace{-15mm}   \resizebox{8.0cm}{!}{\includegraphics[clip,scale=0.30,bb=0 0 555 449]{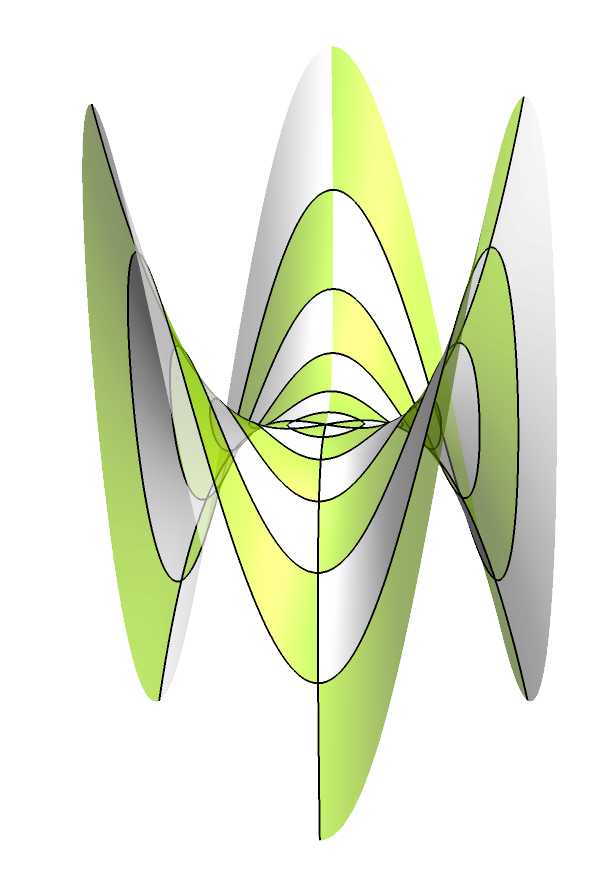}}&
 \hspace{-10mm}  \resizebox{8.0cm}{!}{\includegraphics[clip,scale=0.30,bb=0 0 555 449]{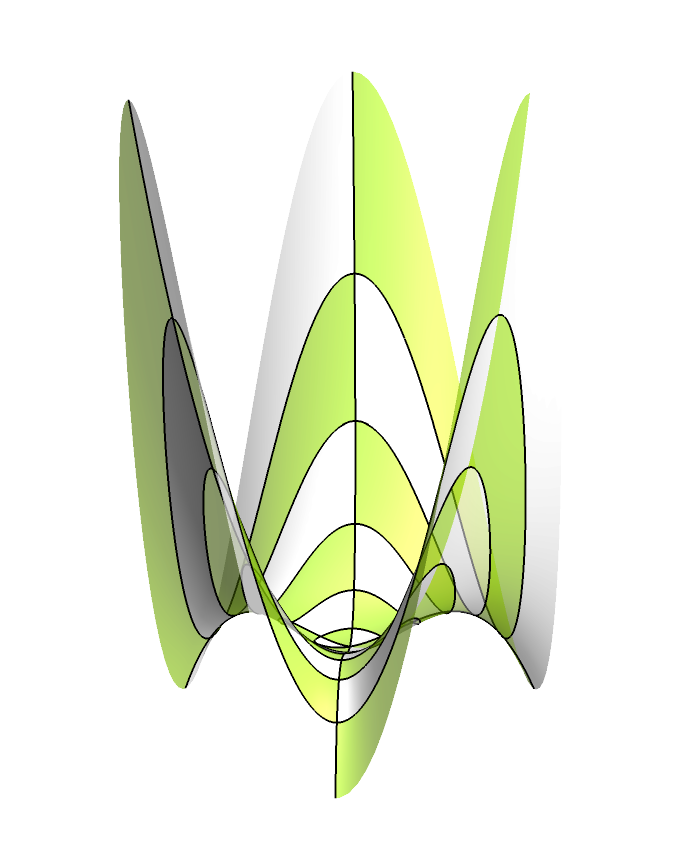}}&
 \hspace{-2mm} \resizebox{8.0cm}{!}{\includegraphics[clip,scale=0.30,bb=0 0 555 449]{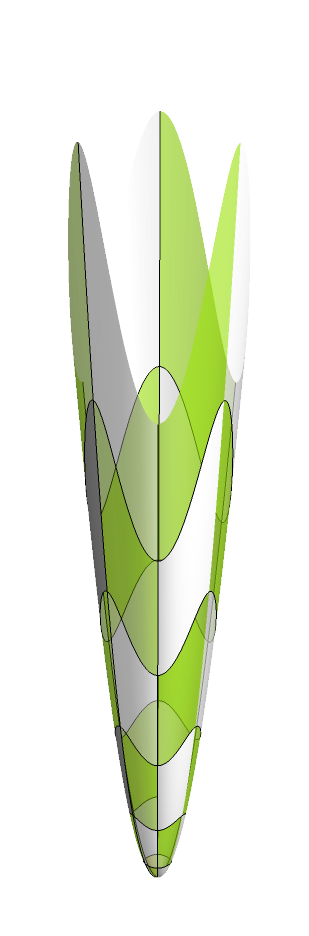}} \\
  {\hspace{-35mm}\footnotesize  Enneper surface with $H=0$} &
  {\hspace{-35mm}\footnotesize  CMC $H$ lift with $H=1.5$} &
  {\hspace{-55mm}\footnotesize  CMC $H$ lift with $H=10$}
 \end{tabular}
 \caption{Enneper surface of order $n=3$ with $H=0$ (left) and its CMC lifts with $H=1.5$ (center) and with $H=10$ (right). These surfaces are isometric entire constant mean curvature graphs with unbounded $K$.}
 \label{Fig:Ennepern3}
\end{center}
\end{figure}
\end{example}

\begin{example}[Entire constant mean curvature $H$ graphs without umbilic points]\label{ex:noumbilic}
Let us take Weierstrass data $(h_2,\omega)=(e^z, \operatorname{d}\!z)$ ($z\in \mathbb{C}$) in \eqref{eq:Wformula}, then we obtain a singly periodic zero mean curvature entire graph and its CMC $H$ lift :
	\[
		X_H(z)=\left( e^x\cos{y} +\frac{H}{2}(x^2 + y^2),x,y\right),\quad z=x+iy \in \mathbb{C}.
	\]
By Lemma \ref{lemma},  $K=H^2-|e^z|^2$ holds and hence $X_H$ does not have an umbilic point. See Figure \ref{Fig:Noumbilic}. 

\begin{figure}[htb]
\begin{center}
 \begin{tabular}{{c@{\hspace{-20mm}}c@{\hspace{-20mm}}c}}
\hspace{-5mm}   \resizebox{8.0cm}{!}{\includegraphics[clip,scale=0.30,bb=0 0 555 449]{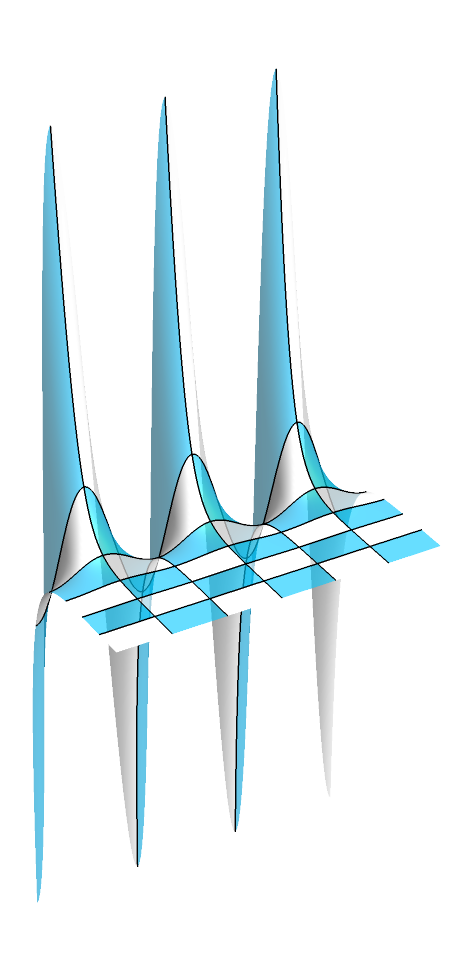}}&
 \hspace{-10mm}  \resizebox{8.0cm}{!}{\includegraphics[clip,scale=0.30,bb=0 0 555 449]{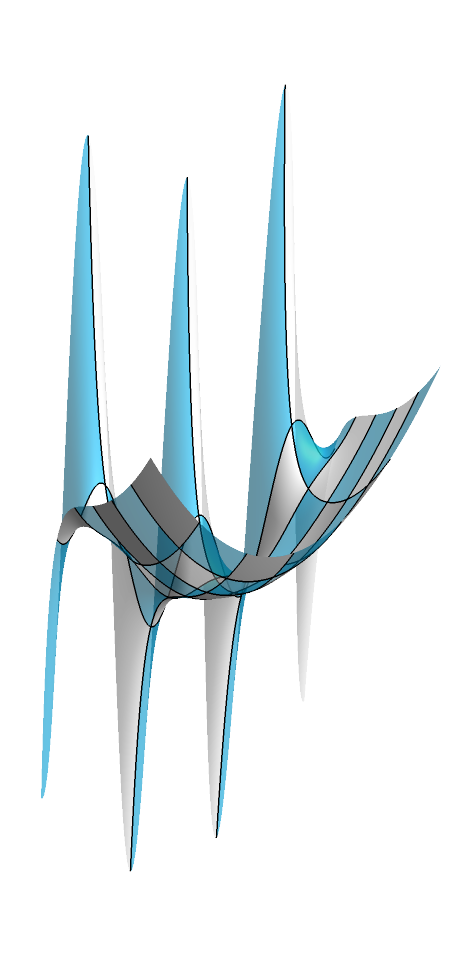}}&
 \hspace{-10mm} \resizebox{8.0cm}{!}{\includegraphics[clip,scale=0.30,bb=0 0 555 449]{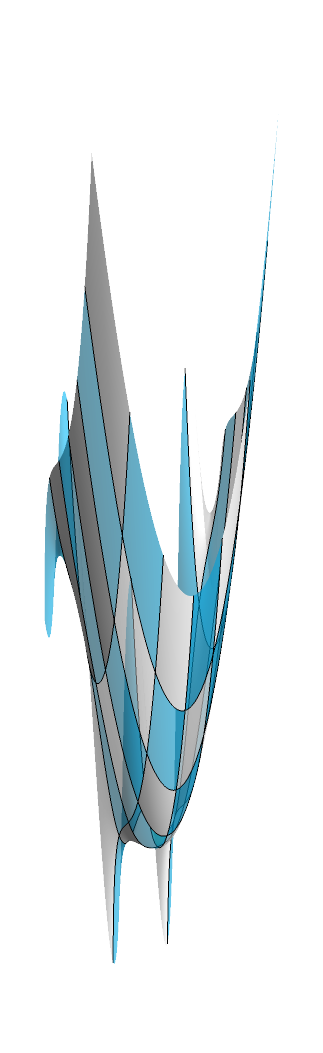}} \\
  {\hspace{-55mm}\footnotesize  Entire graph with $H=0$} &
  {\hspace{-60mm}\footnotesize   CMC $H$ lift with $H=0.1$} &
  {\hspace{-65mm}\footnotesize  CMC $H$ lift with $H=0.5$}
 \end{tabular}
 \caption{Entire constant mean curvature $H$ graphs $X_H$ without umbilic points.}
 \label{Fig:Noumbilic}
\end{center}
\end{figure}
\end{example}

At the end of this section, we present some corresponding results on the behavior of the Gaussian curvature of constant mean curvature surfaces in $\mathbb{L}^3$ in order to demonstrate that Theorem \ref{thm:main} is interesting phenomenon specific to isotropic space.

\begin{remark}
Regarding a value distribution of $K$ for constant mean curvature hypersurfaces in $\mathbb{L}^n$, Cheng-Yau \cite{CY} and Treibergs \cite{T} (see also Choi-Treibergs \cite{CT} and Wan \cite{Wan}) showed that any complete spacelike constant mean curvature  hypersurface in $\mathbb{L}^n$ has non-positive sectional curvature.

Especially in the $\mathbb{L}^3$ case, this means that the Gaussian curvature $K$ is non-positive. In addition, for a spacelike constant mean curvature $H$ surface in $\mathbb{L}^3$, the following relation holds.
	\[
		H^2+K = \left( \frac{\kappa_1+\kappa_2}{2}\right)^2+\left(-\kappa_1\kappa_2 \right) =  \left( \frac{\kappa_1-\kappa_2}{2}\right)^2\geq 0,
	\]
where $\kappa_1$ and $\kappa_2$ are the principal curvatures of the surface.
Combining these two relations, we obtain the estimation
	\[
		-H^2\leq K \leq 0,
	\]
that is, any complete spacelike constant mean curvature $H$ surface in $\mathbb{L}^3$ has bounded Gaussian curvature. 
\end{remark}

Moreover, Milnor \cite{Milnor} and Yamada \cite{Yamada} (see also Kawakami-Satake \cite{KS}) showed that non-existence of umbilic points strongly affects of global shapes of such surfaces as follows.

\begin{fact}\label{fact:inL3}
If a complete spacelike surface in $\mathbb{L}^3$ with constant mean curvature $H\neq 0$ satisfies $H^2 + K\geq  \varepsilon$ for some $\varepsilon>0$, then the surface is the hyperbolic cylinder.
\end{fact}

Here, the hyperbolic cylinder is the flat surface $-x_0^2 +x_1^2 = -\frac{1}{4H^2}$ for $x_0>0$ in $\mathbb{L}^3$ with the coordinates $(x_0,x_1,x_2)$. By Fact \ref{fact:inL3}, we can see that for any complete spacelike CMC $H$ surface in $\mathbb{L}^3$ which is not totally umbilic and not the hyperbolic cylinder, $K$ takes values arbitrary close to $-H^2$. 
This is remarkably different from the behavior of Gaussian curvature of surfaces in isotropic space discussed in this section.


\section{Relating results from a viewpoint of PDE}\label{sec:pde}
As we saw in \eqref{eq:HK}, the curvatures $H$ and $K$ of a graph $\ell=f(x,y)$ in $\mathbb{I}^3$ are written as the Laplacian $\Delta f$ and the hessian determinant $\mathcal{H}_f$ of $f$, respectively :
	\[
		2H=\Delta f\ (:=f_{xx}+f_{yy}),\quad K=\mathcal{H}_f\ (:=f_{xx}f_{yy}-f_{xy}^2).
	\]
We examine the results in this article from a viewpoint of PDE.  
First, the following result proved by Reilly \cite{Reilly} is also obtained as a corollary of Theorem \ref{thm:main}. 

\begin{fact}[Reilly]
Any entire solution $f=f(x,y)$ of $\Delta f =2H$ for a constant $H$ with bounded hessian determinant $\mathcal{H}_f$ is a quadratic polynomial.
\end{fact}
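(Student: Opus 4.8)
The plan is to show that the Reilly result follows directly from Theorem~\ref{thm:main} together with the classification list (a)$\sim$(f). The bridge is the dictionary $2H = \Delta f$, $K = \mathcal{H}_f$ between the PDE quantities and the surface curvatures, so an entire solution $f$ of $\Delta f = 2H$ is exactly an entire graph $\ell = f(x,y)$ of constant mean curvature $H$ in $\mathbb{I}^3$, and its hessian determinant $\mathcal{H}_f$ is its Gaussian curvature $K$.

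First I would observe that completeness is automatic here: an entire graph over $\mathbb{R}^2$ with the induced metric $\operatorname{d}\!x^2 + \operatorname{d}\!y^2$ is complete (this is the flat metric on $\mathbb{R}^2$ pulled back by the projection, as used in the proof of Theorem~\ref{thm:main}). Hence $\ell = f(x,y)$ is a connected complete spacelike CMC $H$ surface in $\mathbb{I}^3$. Next, the hypothesis that $\mathcal{H}_f$ is bounded translates to $K$ being bounded, which by Remark~\ref{rem:bound} means $C_0 \le K \le H^2$ for some constant $C_0$; in particular $K$ omits every value below $C_0$, so $K$ has an exceptional value less than $H^2$. By the first assertion of Theorem~\ref{thm:main}, $K$ cannot be non-constant, so $K$ is constant. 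Then Lemma~\ref{lemma2} identifies the surface, up to an isometry of $\mathbb{I}^3$, with the graph of $f(x,y) = H\bigl(\tfrac{x^2+y^2}{2}\bigr) + \sqrt{H^2-K}\bigl(\tfrac{x^2-y^2}{2}\bigr)$, which is a quadratic polynomial. Finally I would note that the isometries \eqref{Eq:202505200927AM} act affinely on the $(\ell,x,y)$-coordinates, so composing with such an isometry sends a quadratic graph to a quadratic graph; hence the original $f$ is itself a quadratic polynomial.

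The only point requiring a little care, and the one I expect to be the main (minor) obstacle, is this last step: one must check that the class of quadratic-polynomial graphs is preserved under the isometry group of $\mathbb{I}^3$. Concretely, if $\tilde\ell = \tilde f(\tilde x, \tilde y)$ is a quadratic polynomial and $\iota$ is an isometry of the form \eqref{Eq:202505200927AM}, then the image surface is again a graph $\ell = f(x,y)$ (the $(x,y)$-block is an invertible affine map $\mathcal{R}$ plus translation, so the projection stays a diffeomorphism of $\mathbb{R}^2$) and the new height function is $f(x,y) = s\,\tilde f(\mathcal{R}^{-1}(\text{shift of }(x,y))) + (\text{affine in }x,y)$, which is still quadratic. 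Everything else is a direct invocation of the theorem, the lemma, and the remark already established above.
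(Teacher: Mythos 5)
Your proposal is correct and follows exactly the route the paper intends: the paper simply asserts that Reilly's result ``is also obtained as a corollary of Theorem~\ref{thm:main},'' and your argument (entire graph $\Rightarrow$ complete spacelike CMC $H$ surface, bounded $\mathcal{H}_f=K$ $\Rightarrow$ $K$ omits a value below $H^2$ $\Rightarrow$ $K$ constant by Theorem~\ref{thm:main}, then Lemma~\ref{lemma2} plus affinity of the isometries \eqref{Eq:202505200927AM}) supplies precisely the details the paper leaves implicit. Your care about the last step (quadratic graphs being preserved under the isometry group) is warranted and handled correctly.
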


Combining this with Theorem \ref{thm:main}, we obtain the following :

\begin{corollary}
Any entire solution $f=f(x,y)$ of $\Delta f =2H$ for a constant $H$, the image of the hessian determinant $\mathcal{H}_f$ of $f$ is either $\{\text{single point}\}$, $(-\infty, H^2)$ or $(-\infty, H^2]$. Moreover, if the image of the hessian determinant of $f$ is a single point, then $f$ is written as \eqref{eq:EHP} up to a sign, a coordinate change and a linear function of $x$ and $y$.
\end{corollary}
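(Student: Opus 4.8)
The plan is to obtain the statement from Theorem \ref{thm:main} and Lemma \ref{lemma2} through the dictionary ``graph $\ell=f(x,y)$ in $\mathbb{I}^3$'' $\leftrightarrow$ ``solution $f$ of $\Delta f = 2H$'' furnished by \eqref{eq:HK}. First I would record that an entire solution $f$ of $\Delta f=2H$ is automatically smooth, indeed real-analytic, since $f-\tfrac{H}{2}(x^2+y^2)$ is harmonic. Hence its graph $S=\{(f(x,y),x,y):(x,y)\in\mathbb{R}^2\}$ is a genuine connected spacelike surface in $\mathbb{I}^3$. The degenerate metric $\operatorname{d}\!x^2+\operatorname{d}\!y^2$ of $\mathbb{I}^3$ restricts to $S$ as the standard flat metric $\operatorname{d}\!x^2+\operatorname{d}\!y^2$ on $\mathbb{R}^2$, which is complete; so $S$ is a connected complete spacelike surface, and by \eqref{eq:HK} it has constant mean curvature $H$ and Gaussian curvature equal to $\mathcal{H}_f$ (read off on $\mathbb{R}^2$ via the projection $\pi$). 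Thus $S$ satisfies every hypothesis of Theorem \ref{thm:main}.

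Next I would apply Theorem \ref{thm:main} with $K=\mathcal{H}_f$. If $\mathcal{H}_f$ is non-constant, the theorem says $K$ attains every value $<H^2$; combined with the pointwise bound $K\le H^2$ valid on any spacelike surface (equivalently $H^2-K=|\operatorname{d}\!h_2/\omega|^2\ge 0$, cf.\ Lemma \ref{lemma}), the image of $\mathcal{H}_f$ is exactly $(-\infty,H^2)$ or $(-\infty,H^2]$, depending on whether $S$ has no umbilic point or has one. If $\mathcal{H}_f$ is constant, its image is a single point. This establishes the asserted trichotomy.

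For the ``moreover'' part, suppose the image of $\mathcal{H}_f$ is a single point, i.e.\ $K:=\mathcal{H}_f$ is constant. Lemma \ref{lemma2} then says $S$ agrees, up to an isometry of $\mathbb{I}^3$, with a piece of the graph \eqref{eq:EHP}; and since both $S$ and the graph \eqref{eq:EHP} are entire and the isometries \eqref{Eq:202505200927AM} carry entire graphs bijectively onto entire graphs, $S$ is in fact the image of the whole graph \eqref{eq:EHP} under such an isometry. It remains to translate \eqref{Eq:202505200927AM} into a statement about $f$: the block $\mathcal{R}\in O(2)$ together with the translation $(x_0,y_0)$ is an affine orthogonal change of the $(x,y)$-coordinates, the sign $s\in\{-1,1\}$ replaces $f$ by $\pm f$, and $k_1,k_2,\ell_0$ add an affine function of $x,y$. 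Hence $f$ equals the right-hand side of \eqref{eq:EHP}, expressed in suitably rotated and translated coordinates, up to an overall sign and an added linear-plus-constant function of $x$ and $y$ — which is exactly the claimed normal form.

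Since every step is a transcription, there is no analytic obstacle. The only points requiring a little care are (i) the observation that an entire graph in $\mathbb{I}^3$ is automatically complete, so that Theorem \ref{thm:main} applies at all — this is immediate because the induced metric is literally the complete flat metric on $\mathbb{R}^2$ — and (ii) the bookkeeping that upgrades ``a part of'' in Lemma \ref{lemma2} to ``all of'' using that both surfaces are entire graphs, and that matches the three freedoms ``sign / coordinate change / linear function'' with the corresponding factors of the isometry group \eqref{Eq:202505200927AM}.
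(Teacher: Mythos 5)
Your proposal is correct and follows essentially the same route as the paper, which derives this corollary by reading an entire solution of $\Delta f=2H$ as a complete CMC $H$ graph via \eqref{eq:HK} and then invoking Theorem \ref{thm:main} together with the classification of the constant-$K$ case (Lemma \ref{lemma2} / Reilly's theorem, the latter itself presented as a consequence of Theorem \ref{thm:main}). Your added bookkeeping --- completeness of the flat induced metric on an entire graph, and matching the isometries \eqref{Eq:202505200927AM} with the ``sign / coordinate change / linear function'' freedoms --- fills in details the paper leaves implicit.
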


 So far, we have focused on surfaces in $\mathbb{I}^3$ with constant mean curvature. Now we turn our attention to surfaces in $\mathbb{I}^3$ with constant Gaussian curvature. For example, the following J\"orgens' theorem \cite{Jorgens} on solutions of the Monge-Amp\`ere equation is well-known.

\begin{fact}[J\"orgens]
Any entire solution $f=f(x,y)$ of $f_{xx}f_{yy}-f_{xy}^2=K$ for a positive constant $K$  is a quadratic polynomial.
\end{fact}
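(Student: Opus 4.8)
The plan is to reinterpret $f$ as the height function of an entire spacelike graph in $\mathbb{I}^3$ with constant positive Gaussian curvature $K$ and to carry out the classical Lewy--J\"orgens argument (cf.\ \cite{Jorgens}), which in the language of this paper reduces to producing a bounded entire holomorphic function and applying Liouville's theorem. We may take $f\in C^\infty$, by the standard interior regularity theory for the Monge-Amp\`ere equation with a positive constant right-hand side (or simply because this is the intended class of solutions). Since $f_{xx}f_{yy}-f_{xy}^2=K>0$ forces the Hessian $D^2 f$ to be definite on the connected set $\mathbb{R}^2$, after replacing $f$ by $-f$ (which changes neither $K$ nor the conclusion) we may assume $f$ is strictly convex, so that $2H=\Delta f>0$; the universal inequality $K\le H^2$ recalled in Remark~\ref{rem:bound}, which is just $(\Delta f)^2-4K=(f_{xx}-f_{yy})^2+4f_{xy}^2\ge 0$, then gives $\Delta f\ge 2\sqrt K$ everywhere.

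First I would introduce the Lewy-type transformation
\[
\Phi\colon\mathbb{R}^2\to\mathbb{R}^2,\qquad
\Phi(x,y)=\bigl(\sqrt K\,x+f_x,\ \sqrt K\,y+f_y\bigr)=\nabla\!\left(\tfrac{\sqrt K}{2}\,(x^2+y^2)+f\right)
\]
(a parallel-surface-type map built from the graph and its gradient $\nabla f$, the latter being, up to sign, the horizontal component of the lightlike Gauss map), and show that it is a global $C^\infty$ diffeomorphism: its Jacobian $\sqrt K\,I+D^2 f$ is positive definite with determinant $\sqrt K\,(2\sqrt K+\Delta f)>0$, so $\Phi$ is a local diffeomorphism; it is injective as the gradient of a strictly convex function; and it is surjective because $\tfrac{\sqrt K}{2}|x|^2+f$ grows superlinearly (it dominates $\tfrac{\sqrt K}{2}|x|^2$ plus an affine minorant of the convex function $f$), so $x\mapsto\tfrac{\sqrt K}{2}|x|^2+f(x)-v\cdot x$ attains its minimum, at which $\Phi=v$, for every $v\in\mathbb{R}^2$. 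Writing $z=x+iy$ and $\zeta=(\sqrt K\,x+f_x)+i(\sqrt K\,y+f_y)$ for the two global complex coordinates on the surface, a short computation with the inverse function theorem and the equation $f_{xx}f_{yy}-f_{xy}^2=K$ yields
\[
\frac{\partial z}{\partial\zeta}=\frac{1}{2\sqrt K}\quad(\text{a real constant}),\qquad
\frac{\partial z}{\partial\bar\zeta}=\frac{(f_{yy}-f_{xx})-2if_{xy}}{2\sqrt K\,(2\sqrt K+\Delta f)}.
\]

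Because $\partial z/\partial\zeta\equiv\tfrac1{2\sqrt K}$ is a real constant, the function $\Psi(\zeta):=\bar z-\tfrac{\bar\zeta}{2\sqrt K}$ satisfies $\partial\Psi/\partial\bar\zeta=\overline{\partial z/\partial\zeta}-\tfrac1{2\sqrt K}=0$, hence is holomorphic on the whole $\zeta$-plane---which is all of $\mathbb{C}$, by the surjectivity of $\Phi$---so $\Psi$ is entire, and
\[
|\Psi'(\zeta)|=\left|\,\overline{\frac{\partial z}{\partial\bar\zeta}}\,\right|
=\frac{\sqrt{(\Delta f)^2-4K}}{2\sqrt K\,(2\sqrt K+\Delta f)}<\frac{1}{2\sqrt K},
\]
the strict inequality because $\sqrt{(\Delta f)^2-4K}=\sqrt{(\Delta f-2\sqrt K)(\Delta f+2\sqrt K)}<\Delta f+2\sqrt K$. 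Thus $\Psi'$ is a bounded entire function and is therefore constant by Liouville's theorem. Since the middle expression above is strictly monotonic in $\Delta f$ on $[2\sqrt K,\infty)$, it follows that $\Delta f$ is constant; then $(f_{yy}-f_{xx})+2if_{xy}=2\sqrt K\,(2\sqrt K+\Delta f)\,\Psi'$ is constant, so $f_{xx}-f_{yy}$ and $f_{xy}$ are constant, and together with $f_{xx}+f_{yy}=\Delta f$ this makes $D^2 f$ constant, i.e.\ $f$ a quadratic polynomial.

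The only step that is not a one-line computation or an appeal to Liouville is the surjectivity of $\Phi$ onto $\mathbb{R}^2$, and this is the main obstacle: it is precisely where the hypothesis that $f$ is \emph{entire} is used in an essential, non-algebraic way. I would also remark that the argument in fact yields an isotropic Liebmann-type statement---any complete spacelike surface in $\mathbb{I}^3$ with constant Gaussian curvature $K>0$ has constant mean curvature, and is hence an elliptic or circular paraboloid up to an isometry of $\mathbb{I}^3$---which is the precise counterpart of Theorem~\ref{thm:main} with the roles of $H$ and $K$ interchanged, except that here the bounded entire function $\Psi'$ (playing the role of the entire function $\varphi$ in the proof of Theorem~\ref{thm:main}) is forced to be constant rather than merely to omit at most one value.
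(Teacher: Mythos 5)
Your proof is correct, but note that the paper does not prove this statement at all: it is recorded as a \emph{Fact} and attributed to J\"orgens \cite{Jorgens}, so there is no internal proof to match. What you have written is essentially J\"orgens' original argument via the Lewy transformation, correctly rescaled from the usual normalization $K=1$ to general $K>0$. I checked the computations: $\det(\sqrt K\,I+D^2f)=\sqrt K\,(2\sqrt K+\Delta f)$, the Wirtinger inversion giving $z_\zeta=\overline{\zeta_z}/J$ and $z_{\bar\zeta}=-\zeta_{\bar z}/J$ does yield $z_\zeta\equiv 1/(2\sqrt K)$ and the stated formula for $z_{\bar\zeta}$, the bound $|\Psi'|<1/(2\sqrt K)$ follows from $\sqrt{(\Delta f-2\sqrt K)(\Delta f+2\sqrt K)}<\Delta f+2\sqrt K$, and the function $t\mapsto\sqrt{t^2-4K}/\bigl(2\sqrt K\,(2\sqrt K+t)\bigr)$ is indeed strictly increasing on $[2\sqrt K,\infty)$, so constancy of $|\Psi'|$ forces $\Delta f$, and then all of $D^2f$, to be constant. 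The coercivity argument for surjectivity of $\Phi$ (an affine minorant of the convex $f$ plus the quadratic term) is the standard and correct way to use the entirety hypothesis. Your closing observation is also apt and ties the Fact to the paper's framework: your bounded entire $\Psi'$ plays exactly the role of the entire function $\varphi$ with $H^2-K=|\varphi|^2$ in the proof of Theorem~\ref{thm:main}, except that boundedness lets you invoke Liouville instead of Picard, which is why constant $K>0$ rigidly forces a paraboloid while constant $H$ does not. The only caveat worth stating explicitly is the regularity class: the theorem as classically stated assumes $f\in C^2$, and your argument needs $\Psi\in C^1$ in $\zeta$, which holds for $C^2$ solutions; your parenthetical appeal to interior regularity covers this.
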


Combining this with \cite[Theorem 5.1]{S1} as in the proof of Theorem \ref{thm:main} yields the following :

\begin{corollary}
If a connected complete spacelike surface in $\mathbb{I}^3$ has positive constant Gaussian curvature $K$, then the surface is the entire graph of a quadratic polynomial. In particular, the surface is an elliptic paraboloid.
\end{corollary}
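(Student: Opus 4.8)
The plan is to mimic the proof of Theorem~\ref{thm:main}, simply replacing the constant-mean-curvature input (Lemma~\ref{lemma2}) by J\"orgens' theorem, as the parenthetical in the statement already indicates. First I would invoke \cite[Theorem 5.1]{S1}: since the surface $S$ is connected, complete and spacelike, the projection $\pi\colon(\ell,x,y)\mapsto(x,y)$ restricts to an isometry of $S$ onto $\pi(S)$ with the flat metric $\operatorname{d}\!x^2+\operatorname{d}\!y^2$, and completeness forces $\pi(S)=\mathbb{R}^2$. Hence $S$ is the entire graph $\ell=f(x,y)$ of a smooth function $f$ on $\mathbb{R}^2$, exactly as in the opening paragraph of the proof of Theorem~\ref{thm:main}.

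Next, by the formula \eqref{eq:HK} the hypothesis that $K$ is a positive constant says precisely that $f$ is an entire solution of the Monge-Amp\`ere equation $f_{xx}f_{yy}-f_{xy}^2=K>0$. Applying J\"orgens' theorem (the Fact stated above) then gives that $f$ is a quadratic polynomial, say $f(x,y)=ax^2+bxy+cy^2+(\text{affine})$. Computing the hessian determinant yields $4ac-b^2=K>0$, so the quadratic form $ax^2+bxy+cy^2$ is definite.

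Finally I would normalize $f$ by an isometry of $\mathbb{I}^3$. The maps \eqref{Eq:202505200927AM} allow one to add an arbitrary affine function of $x,y$ to the $\ell$-coordinate (and to change its sign) and to act on the $xy$-plane by $O(2)$ together with translations; using a translation to remove the linear part and a rotation to diagonalize the definite form, $f$ takes the normal form $\alpha\tilde{x}^2+\beta\tilde{y}^2$ with $4\alpha\beta=K>0$, i.e.\ $\alpha\beta>0$. By the definitions in Section~\ref{Sec.2} (case $K>0$ in the classification list (a)--(f)), this surface is an elliptic paraboloid, which proves the claim.

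I do not expect a serious obstacle: the only substantial ingredient is J\"orgens' theorem, which is being taken as given, and \cite[Theorem 5.1]{S1}, already used in this paper; the remainder is the routine linear-algebra normalization of a definite quadratic polynomial. The single point that merits a line of care is verifying that the isometry group of $\mathbb{I}^3$ is rich enough to put $f$ into the form $\alpha\tilde{x}^2+\beta\tilde{y}^2$ — but this is immediate from the explicit form of the isometries \eqref{Eq:202505200927AM}, whose $\ell$-component is $s\ell+k_1x+k_2y+\ell_0$ and whose $xy$-component is an arbitrary Euclidean motion.
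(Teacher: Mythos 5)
Your proposal is correct and follows exactly the route the paper indicates: apply \cite[Theorem 5.1]{S1} to see that a connected complete spacelike surface is an entire graph, then invoke J\"orgens' theorem via the identification $K=f_{xx}f_{yy}-f_{xy}^2$ from \eqref{eq:HK}, and normalize the resulting definite quadratic by an isometry \eqref{Eq:202505200927AM} to recognize an elliptic paraboloid. This matches the paper's (very terse) proof, so no further comment is needed.
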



\section*{Acknowledgement}
%
The first author was supported by JSPS KAKENHI Grant Numbers 23K12979 and 24K06709, and by the Research Institute for Mathematical Sciences (RIMS) at Kyoto University. The second and third authors were supported by the NRF of Korea funded by MSIT (Korea-Austria Scientific and Technological Cooperation RS-2025-1435299, P.I.: Joseph Cho), and the third author was also supported by MSIT (Basic Research Project, Young Scientist Grants (Type A) RS-2026-25500032, P.I.: Joseph Cho). The authors would like to thank Professor Yu Kawakami for informing them of the result in Remark~\ref{fact:inL3} and bringing the related references to their attention.
 

\end{document}